\newtheorem{theorem}{Theorem}[section]
\newtheorem{lemma}[theorem]{Lemma}
\newtheorem{proposition}[theorem]{Proposition}
\newtheorem*{property-P}{Property P}
\theoremstyle{definition}
\newtheorem{definition}[theorem]{Definition}
\newtheorem{remark}[theorem]{Remark}
\renewcommand{\to}{\rightarrow}
\renewcommand{\Im}{\ensuremath{\mathsf{Im\,}}}
\newcommand{\coker}{\ensuremath{\mathsf{coker\,}}}
\renewcommand{\ker}{\ensuremath{\mathsf{ker\,}}}
\newcommand{\Ker}{\ensuremath{\mathsf{Ker\,}}}
\newcommand{\Cc}{\ensuremath{\mathbb{C}}}
\newcommand{\Mf}{\mathcal M}
\newcommand{\Ef}{\mathcal E}
\newcommand{\Ab}{\ensuremath{\mathsf{Ab}}}
\newcommand{\Xc}{\ensuremath{\mathbb{X}}}
\newcommand{\Yc}{\ensuremath{\mathbb{Y}}}
\newcommand{\GrH}{\ensuremath{\mathsf{Grp(Haus)}}}
\newcommand{\GrTot}{\ensuremath{\mathsf{Grp(TotDis)}}}
\newcommand{\Gp}{\ensuremath{\mathsf{Grp}}}
\newcommand{\Ec}{\ensuremath{\mathcal{E}}}
\renewcommand{\hom}{\ensuremath{\mathrm{Hom}}}
\newbox\skewpullbackbox
\newbox\ksewpullbackbox
\newbox\pullbackbox
\def\pullback{\copy\pullbackbox}
\newbox\pushoutbox
\begin{document}

\title{Monotone-light factorisation systems and \\ torsion theories}

\author[MG]{Marino Gran\fnref{fn1}}
\ead{marino.gran@uclouvain.be}

\author[TE]{Tomas Everaert}
\ead{teveraer@vub.ac.be}

\address[MG]{Universit\'e catholique de Louvain,
Institut de Recherche en Math\'ematique et Physique,
Chemin du Cyclotron 2, bte.
L7.01.02,
1348 Louvain-la-Neuve, Belgium.}
\address[TE]{Vakgroep Wiskunde,
Vrije Universiteit Brussel,
 Department of Mathematics,
 Pleinlaan 2,
1050 Brussel,
 Belgium.
 }

\begin{abstract}
Given a torsion theory $(\Yc,\Xc)$ in an abelian category $\Cc$, the reflector $I \colon \Cc \rightarrow \Xc$ to the torsion-free subcategory $\Xc$ induces a reflective factorisation system $(\Ec, \Mf)$ on $\Cc$.
It was shown by A. Carboni, G.M. Kelly, G. Janelidze and R. Par\'e that $(\Ec, \Mf)$ induces a monotone-light factorisation system $(\Ef',\Mf^*)$ by simultaneously stabilising $\Ec$ and localising $\Mf$, whenever the torsion theory is hereditary and any object in $\Cc$ is a quotient of an object in $\Xc$. We extend this result to arbitrary normal categories, and improve it also in the abelian case, where the heredity assumption on the torsion theory turns out to be redundant. Several new examples of torsion theories where this result applies are then considered in the categories of abelian groups, groups, topological groups, commutative rings, and crossed modules. \\ \vspace{1mm}

  \noindent {\bf Keywords}: factorisation system, monotone-light factorisation, torsion theory, normal category, torsion-free groups, totally disconnected groups, reduced rings.\\ \vspace{1mm}
  
  \noindent Math. Subj. Class.: 18E40, 18A40, 16S90, 16N80, 20K15, 54H11.
\end{abstract}

\maketitle

\section*{Introduction}

Let $\Cc$ be a category. Recall that a \emph{prefactorisation system} on $\Cc$ is given by two classes $\Ef$ and $\Mf$ of morphisms such that  $\Ef=\Mf^{\uparrow}$ and $\Mf=\Ef^{\downarrow}$, where 
\[
\Ef^{\downarrow}=\{m | e\downarrow m \  \textrm{for all} \ e\in \Ef\}, \  \ \  \Mf^{\uparrow}=\{e | e\downarrow m \  \textrm{for all} \ m\in \Mf\}
\]  
and the relation $\downarrow$ is defined as follows: for morphisms $e$ and $m$ in $\Cc$,  one writes $e\downarrow m$ if there exists, for every commutative square
\[
\xymatrix{
A \ar[r]^e \ar[d]_a & B \ar@{.>}[ld]|{d}   \ar[d]^b\\
C \ar[r]_m & D,}
\]
a unique morphism $d$ such that $d\circ e=a$ and $m\circ d=b$. A prefactorisation system $(\Ef,\Mf)$ is a \emph{factorisation system} if for every morphism $f$ in $\Cc$ there exist morphisms $e\in \Ef$ and $m\in \Mf$ such that $f=m\circ e$.

From \cite{CHK} we know that one can associate with any full reflection
 \begin{equation}\label{reflection}
 \xymatrix@=30pt{
{\Cc \, } \ar@<1ex>[r]_-{^{\perp}}^-{I} & {\, \Xc \, }
\ar@<1ex>[l]^H  }
 \end{equation}
a prefactorisation system $(\Ef,\Mf)$ (on $\Cc$) where $\Ef$ consists of all morphisms inverted by the reflector $I$ and $\Mf$ is the closure under pullback (along arbitrary morphisms in $\Cc$) of the class of all morphisms which lie in the image of the inclusion functor $H$. In fact, $(\Ef,\Mf)$ often is a factorisation system, and this is the case in particular when $I$ is \emph{semi-left-exact}: $I$ preserves those pullbacks of the form
\[
\xymatrix{
P \ar@{}[rd]|<<{\pullback}\ar[d] \ar[r]& B \ar[d]^{\eta_B}\\
H(X) \ar[r] & HI(B)}
\]
where $\eta_B$ is the reflection unit. In this case, if $f\colon A\to B$ is a morphism in $\Cc$ and if
\[
\xymatrix{
A \ar[rrd]^f \ar[rd] \ar[ddr] && \\
& P  \ar@{}[rd]|<<{\pullback} \ar[r]\ar[d] & B\ar[d]^{\eta_B} \\
&HI(A) \ar[r]_-{HI(f)} & HI(B)
}
\]
is the commutative diagram obtained by pulling back $HI(f)$ along $\eta_B$, then the factorisation $A\to P$ to the pullback $P=HI(A)\times_{HI(B)}B$ is sent by $I$ to an isomorphism, whence its $(\Ef,\Mf)$-factorisation is given by $A\to P\to B$. Unless specified otherwise, throughout the article $(\Ef,\Mf)$ will always denote a factorisation system associated with some semi-left-exact reflection (\ref{reflection}).


Recall from \cite{CJKP} that a reflection is semi-left-exact if and only if it is \emph{admissible} in the sense of categorical Galois theory \cite{J}, where the morphisms in $\Mf$ are called \emph{trivial coverings}. A fundamental observation in Galois theory is that, for a morphism, being a trivial covering is in general not a ``local'' property, by which we mean the following: if  $f\colon A\to B$ is a morphism, then $f$ is said to possess a certain property \emph{locally} if there exists an effective descent morphism $p\colon E\to B$ (i.e. a morphism $p\colon E\to B$ such that the pullback functor $p^*\colon (\Cc\downarrow B)\to (\Cc\downarrow E)$ is monadic \cite{JST}) such that the pullback $p^*(f)\colon E\times_B A\to E$ has this property. Using the fact that the functor $p^*$ reflects isomorphisms, one can verify that any morphism which is locally a member of the class $\Ef$ is already itself in $\Ef$, even for an arbitrary factorisation system $(\Ef,\Mf)$.  In this sense we can say that, for a morphism, being a member of $\Ef$  is a ``local'' property. However, as noted above, the property of being in $\Mf$ is \emph{not} a ``local'' one in this sense: for an arbitrary semi-left-exact reflection there may exist morphisms which are locally trivial coverings (these are exactly the \emph{coverings} of categorical Galois theory) but which are not trivial coverings themselves. Let us denote the class of morphisms that are locally in $\Mf$ by $\Mf^*$.

``Dually'', we have that the class $\Mf$ is always stable under pullback (even for an arbitrary (pre)factorisation system $(\Ef,\Mf)$) while for $\Ef$ this is generally false: it was proved in \cite{CHK} that $\Ef$ is pullback-stable if, and only if, $I$ preserves \emph{arbitrary} pullbacks (i.e. if it is a \emph{localisation}). We shall write $\Ef'$ for the class of morphisms ``stably'' in $\Ef$, i.e. those morphisms of which every pullback is in $\Ef$.

Here, we shall be interested in the following question: if we replace $\Ef$ by $\Ef'$ and $\Mf$ by $\Mf^*$, do we obtain again a factorisation system $(\Ef',\Mf^*)$, which is then both ``stable'' and ``local''? As explained in \cite{CJKP} the answer in general is \emph{no}, even though we always have that  $\Ef'\subseteq (\Mf^*)^{\uparrow}$. However, there is a number of important examples for which the answer is \emph{yes}. For instance, this is the case for the factorisation system  $(\Ef,\Mf)$  on the category of compact Hausdorff spaces associated with the reflective subcategory of totally disconnected spaces: in this case,  $(\Ef',\Mf^*)$ is the Eilenberg/Whyburn monotone-light factorisation system for maps of compact Hausdorff spaces \cite{Eilenberg,Whyburn}. For this reason we shall call a factorisation system \emph{monotone-light} if it is of the form $(\Ef',\Mf^*)$ for some $(\Ef,\Mf)$.

Another example from \cite{CJKP}, or in fact a class of examples, is given by any hereditary torsion theory in an abelian category $\Cc$ with the property that every object of $\Cc$ is the quotient of an object in the ``torsion-free'' subcategory (see below). Also in this case the factorisation system $(\Ef,\Mf)$ associated with the ``torsion-free'' reflection induces a monotone-light factorisation system  $(\Ef',\Mf^*)$. 

In view of the recent interest in torsion theories in contexts more general than the one of abelian categories (see for instance \cite{BG, CDT, EGinf,GR,JT}), a natural question to ask is whether torsion theories induce monotone-light factorisation systems also in a non-abelian context. The aim of the present article is to show that this is indeed the case for torsion theories in a \emph{normal} category \cite{ZJ} (as, for instance, any semi-abelian category \cite{JMT}). This allows us to study several new monotone-light factorisation systems in the categories of groups, topological groups, commutative rings and crossed modules, for example.
Moreover, the torsion theory is not even required to be hereditary, but only to satisfy a suitable property (see condition $(N)$ below) which always holds in the abelian case. Hence, our result will improve the one concerning torsion theories in \cite{CJKP}, even within the abelian context.

\section{Main Results}

By a \emph{pointed} category we mean, as usual, a category $\Cc$ which admits a \emph{zero-object}, i.e.~an object $0\in\Cc$ which is both initial and terminal. For any pair of objects $A, B\in\Cc$, the unique morphism $A\to B$ factorising through the zero-object will also be denoted by $0$. A \emph{short exact sequence} in $\Cc$ is given by a composable pair of morphisms $(k,f)$, as in the diagram
\begin{equation}\label{ses}
\xymatrix{
0 \ar[r] & K \ar[r]^k & A \ar[r]^f \ar[r] & B\ar[r] & 0,}
\end{equation}
such that $k=\ker (f)$ is the kernel of $f$ (the pullback along $f$ of the unique morphism $0\to B$) and $f=\coker (k)$ is the cokernel of $k$ (the pushout by $k$ of $K\to 0$). Given such a short exact sequence, we shall sometimes denote the object $B$ by $A/K$.

\begin{definition}
Let $\Cc$ be a pointed category. A pair $(\Yc,\Xc)$ of full and replete subcategories of $\Cc$ is called a \emph{torsion theory} in $\Cc$ if the following two conditions are satisfied: 
\begin{itemize}
\item
$\hom_{\Cc}(Y,X)=\{0\}$ for any $X\in\Xc$ and $Y\in\Yc$;
\item
for any object $A\in\Cc$ there exists a short exact sequence
\begin{equation}\label{torsionses}
0\to Y \to A \to X \to 0
\end{equation}
such that $X\in\Xc$ and $Y\in\Yc$. 
\end{itemize}
\end{definition}
$\Yc$ is called the \emph{torsion part} and $\Xc$ the \emph{torsion-free part} of the torsion theory $(\Yc,\Xc)$. A full and replete subcategory $\Xc$ of a pointed category $\Cc$ is \emph{torsion-free} if it is the torsion-free part of some torsion theory in $\Cc$. \emph{Torsion} subcategories are defined dually. An important property we shall need later on is that both the torsion and torsion-free parts are closed in $\Cc$ under extensions \cite{JT}: for every short exact sequence (\ref{ses}), if $K$ and $B$ are in $\Xc$ (respectively, in $\Yc$) then also $A$ is in $\Xc$ (respectively, in $\Yc$).
The terminology comes from the classical example  of the torsion theory $( \Ab_{t.}, \Ab_{t.f.})$ in the variety $\Ab$ of abelian groups, where 
$\Ab_{t.f.}$ consists of all torsion-free abelian groups in the usual sense (=abelian groups satisfying, for every $n\geq 1$, the implication $nx=0 \Rightarrow x=0$) and $\Ab_{t.}$ consists of all torsion abelian groups.

A torsion-free subcategory is necessarily a reflective subcategory, while a torsion subcategory is always coreflective: the reflection and coreflection of an object $A$ are given by the short exact sequence (\ref{torsionses}), which is uniquely determined, up to isomorphism. Let $\Xc$ be a reflective subcategory of a pointed category $\Cc$ with pullback-stable normal epimorphisms such that each unit $\eta_A \colon A \rightarrow HI(A)$ is a normal epimorphism  (=the cokernel of some morphism). Then $\Xc$ is torsion-free if, and only if, it is semi-left-exact (see Theorem 1.6 in \cite{EGinf}). Hence, in this context, any torsion-free subcategory induces a reflective factorisation system $(\Ef,\Mf)$, with  $\Ef$ the class of morphisms that are inverted by the reflector $I \colon \Cc \rightarrow \Xc$ and $\Mf$ the closure under pullback of the class of all morphisms in the image of the inclusion functor $H$. Now, replacing $\Ef$ by the class $\Ef'$ of morphisms stably in $\Ef$, and $\Mf$ by the class $\Mf^*$ of morphisms locally in $\Mf$, we would like $(\Ef',\Mf^*)$ to be a factorisation system too. From \cite{CJKP} we know that this is the case whenever the category $\Cc$ is abelian, and if, moreover, the following two conditions are satisfied: the torsion theory $(\Yc,\Xc)$ is hereditary (which means that the torsion part $\Yc$ is closed in $\Cc$ under subobjects) and $\Xc$ ``covers'' $\Cc$ in the following sense: for any object $B\in\Cc$, there exists an epimorphism $E\to B$ such that $E\in\Xc$.  

Here, instead of asking $\Cc$ to be abelian, we shall require it to be merely \emph{normal}. By a regular category $\Cc$ we mean a finitely complete category with the property that any arrow in $\Cc$ factorises as a regular epimorphism followed by a monomorphism, and these factorisations are pullback-stable.  
 
\begin{definition} \cite{ZJ}
A regular pointed category $\Cc$ is called \emph{normal} if every regular epimorphism in $\Cc$ is normal.
\end{definition} 
Any semi-abelian \cite{JMT} (hence in particular any abelian) category is normal and, more generally, any homological category in the sense of \cite{BB}. Examples of normal categories will be considered in the next section. A property of normal categories we shall be needing is that pullbacks reflect monomorphisms: in a pullback square
$$\xymatrix{
P \ar[r] \ar[d]_{p^*(f)} & A \ar[d]^f \\
E \ar[r]_{p} & B 
}$$
 the arrow $f$ is a monomorphism whenever $p^*(f)$ is a monomorphism \cite{BournZJanelidze}.\\
 Another interesting property of normal categories is the following:
 
\begin{lemma}\label{thirdIso}
Let $\mathbb C$ be a normal category. Then, given two normal subobjects $k \colon K \rightarrow A$ and $l \colon L \rightarrow A$ such that $K \subseteq L$, i.e. $k$ factors through $l$, then there is an isomorphism $$A/L \cong \frac{A/K}{L/K}. $$
\end{lemma}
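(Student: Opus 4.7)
The plan is to construct the isomorphism from the universal properties of cokernels, exploiting normality to identify $L/K$ with a suitable normal subobject of $A/K$. Let $q_K\colon A\to A/K$ and $q_L\colon A\to A/L$ denote the cokernels of $k$ and $l$. Since $k$ factors through $l$, we have $q_L\circ k=0$, so there is a unique morphism $\pi\colon A/K\to A/L$ with $\pi\circ q_K=q_L$. The goal then reduces to showing that $\pi$ is the cokernel of an inclusion $L/K\hookrightarrow A/K$.

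First, I would verify that $K$ is a normal subobject of $L$ and that $L/K$ embeds naturally as a subobject of $A/K$. Consider the composite $q_K\circ l\colon L\to A/K$. Its kernel, computed as a pullback, is $l^{-1}(\ker q_K)=l^{-1}(K)$, and since $K\subseteq L$ and $l$ is a monomorphism, this pullback is just $K$. Hence $K=\ker(q_K\circ l)$ is normal in $L$, and $L/K:=\coker(K\hookrightarrow L)$ makes sense. Taking the regular-epi/mono factorisation of $q_K\circ l$, which exists because $\Cc$ is regular, normality of $\Cc$ forces the regular epi factor to be the normal epi on its kernel $K$, so the factorisation takes the form $L\twoheadrightarrow L/K\hookrightarrow A/K$.

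Next, I would identify $L/K$ with $\ker(\pi)$. Pulling back $\ker(\pi)\hookrightarrow A/K$ along $q_K$ gives $\ker(\pi\circ q_K)=\ker(q_L)=L$. Since $q_K$ is a regular (= normal) epimorphism, this forces $\ker(\pi)$ to coincide with the regular image of $L$ in $A/K$, which by the previous step is $L/K$. Finally, since $\pi\circ q_K=q_L$ is a regular epi and $q_K$ is a regular epi, $\pi$ itself is a regular epimorphism (using that a morphism whose composite with an epi is a strong epi is itself strong, and that strong epis coincide with regular epis in any regular category). Being regular and hence normal, $\pi$ equals the cokernel of its kernel, so $A/L\cong (A/K)/(L/K)$.

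The principal subtlety is the second step: the lemma tacitly requires $L/K$ to sit inside $A/K$ as a normal subobject, and one must confirm that the intrinsic quotient $L/K=\coker(K\hookrightarrow L)$ agrees with the image of $L$ in $A/K$ and that this image is genuinely normal. This is exactly where the hypothesis that $\Cc$ is regular (for the image factorisation) and normal (to guarantee that the regular epi part is a cokernel of its kernel, and that regular-epi images of normal subobjects remain normal) is used; it replaces the easy abelian-case argument where these identifications are automatic.
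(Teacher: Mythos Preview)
Your proof is correct and follows essentially the same route as the paper's: both construct $\pi\colon A/K\to A/L$, identify the embedding $L/K\hookrightarrow A/K$ via the regular-epi/mono factorisation of $q_K\circ l$, show this embedding coincides with $\ker(\pi)$ using pullback-stability of regular epimorphisms, and conclude because $\pi$, being normal, is the cokernel of its kernel. Your write-up is in fact slightly more explicit than the paper's at two points---you justify why $\pi$ is a regular epimorphism (the paper simply asserts it) and you spell out the uniqueness-of-factorisation argument identifying $L/K$ with $\ker(\pi)$---but the underlying argument is the same.
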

\begin{proof}
Consider the following commutative diagram of short exact sequences:
$$
\xymatrix@=15pt{
& & 0 \ar[d] & & \\
0 \ar[r] & K \ar@{=}[d]  \ar[r] & L \ar[r] \ar[d]^l & L/K \ar@{.>}[d]^{\lambda}
 \ar[r] & 0 \\
0 \ar[r] & K \ar[r]^k & A \ar[r]^-{\pi_K} \ar[d] & A/K \ar@{.>}[d]^{\pi}  \ar[r] & 0  \\ 
  & & A/L \ar[d] \ar@{=}[r]& A/L \\
  & & 0& &  }
$$
The fact that the arrow $\pi_K \circ l \colon L \rightarrow A/K$ in $\mathbb C$ factors as a normal epimorphism followed by a monomorphism implies that the dotted arrow $\lambda$, induced by the universal property of the quotient $L \rightarrow L/K$, is a monomorphism. Furthermore, the induced arrow $\pi \colon A/K \rightarrow A/L$ is a normal epimorphism, as is the 
induced factorisation $\phi \colon L \rightarrow \mathsf{Ker}(\pi)$ such that $\mathsf{ker}(\pi) \circ \phi= \pi_K \circ l$ (since normal epimorphisms are pullback-stable).
It then follows that the right-hand vertical sequence is short exact, since $\pi$ is the cokernel of its kernel $\lambda$.
\end{proof}

As mentioned in the introduction, we shall not have to require $(\Yc,\Xc)$ to be hereditary. However, as in \cite{CJKP}, we \emph{will} be needing the condition that $\Xc$ ``covers'' $\Cc$, but we now have to make a choice: in an abelian category, an epimorphism is the same as a normal epimorphism and also the same as an effective descent morphism, but in a general normal category these notions are all distinct. Here, we shall require  $\Xc$  to have the following condition:  for every object $B\in\Cc$, we shall assume the existence of an \emph{effective descent morphism} $E\to B$ such that $E\in\Xc$.

The following condition, ``invisible'' in the abelian context, will also be needed: 
 \begin{enumerate}
 \item[(N)]\label{conditionPageN} for any normal monomorphism $k \colon K \rightarrow A$, the monomorphism \\ $k \circ t_K \colon T(K) \rightarrow A$ is normal.
\end{enumerate}
(Here we have written $t_K\colon T(K)\to K$ for the counit of the coreflection $\Cc\to \Yc$.)

Before stating and proving our main result, we recall some results from \cite{EGinf} needed in the proof, as well as the following notation: we write $\overline{\Ef}$ for the class of all normal epimorphisms $f\colon A\to B$ whose kernel $\Ker(f)$ is in $\Yc$, and $\overline{\Mf}$ for the class of all morphisms $f\colon A\to B$ whose kernel $\Ker(f)$ is in $\Xc$.

The first result is Proposition $3.5$ in \cite{EGinf}:
\begin{proposition}\label{inducedfactorisation}
If $(\Yc,\Xc)$ is a torsion theory in a normal category $\Cc$ satisfying condition $(N)$, then $(\overline{\Ef},\overline{\Mf})$ is a stable factorisation system on $\Cc$.
\end{proposition}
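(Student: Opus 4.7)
The plan is to construct the $(\overline{\Ef},\overline{\Mf})$-factorisation explicitly, then check orthogonality, and finally verify that $\overline{\Ef}$ is pullback-stable. The factorisation of $f\colon A\to B$ will be obtained by quotienting out the torsion part of the kernel. First I would write $k\colon K\to A$ for the kernel of $f$ and consider the torsion short exact sequence $0\to T(K)\to K\to K/T(K)\to 0$, so that $T(K)\in \Yc$ and $K/T(K)\in \Xc$. Condition $(N)$ gives that $k\circ t_K\colon T(K)\to A$ is a normal monomorphism, so it admits a cokernel $e\colon A\to A/T(K)$, which is a normal epimorphism with $\Ker(e)=T(K)\in \Yc$; hence $e\in \overline{\Ef}$. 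Since $T(K)\subseteq K=\Ker(f)$, the morphism $f$ factors uniquely as $m\circ e$. To see that $m\in \overline{\Mf}$, I would pass to the regular epi--mono factorisation $f=i\circ \pi_K$ available in the normal category $\Cc$; the quotient $\pi_K$ then factors as $p\circ e$ via an induced map $p\colon A/T(K)\to A/K$, whence $m=i\circ p$. Since $i$ is monic, $\Ker(m)\cong \Ker(p)$, and Lemma~\ref{thirdIso} applied to $T(K)\subseteq K\subseteq A$ identifies this with $K/T(K)\in \Xc$.

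For orthogonality, suppose we are given a commutative square with top arrow $e\in \overline{\Ef}$, bottom arrow $m\in \overline{\Mf}$, left-hand morphism $a$ and right-hand morphism $b$. The restriction of $a$ along the inclusion $\Ker(e)\to A$ lands in $\Ker(m)$, since $m\circ a$ agrees with $b\circ e$ and the latter vanishes on $\Ker(e)$. But $\Ker(e)\in \Yc$ and $\Ker(m)\in \Xc$, so by the torsion theory axioms the induced morphism $\Ker(e)\to \Ker(m)$ is zero. Hence $a$ vanishes on $\Ker(e)$ and, since $e$ is the cokernel of its kernel, factors uniquely through $e$ as some $d\colon B\to C$. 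The equality $m\circ d=b$ then follows from $e$ being an epimorphism.

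For stability, normal epimorphisms are pullback-stable in any regular category, and the kernel of a pulled-back morphism is canonically isomorphic to the kernel of the original. Pulling back a morphism in $\overline{\Ef}$ therefore yields a normal epimorphism whose kernel again lies in $\Yc$. The class $\overline{\Mf}$ is automatically pullback-stable (as in any prefactorisation system), so the whole pair $(\overline{\Ef},\overline{\Mf})$ is a stable factorisation system.

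The main obstacle I anticipate is the verification that $m\in \overline{\Mf}$: this is where condition $(N)$ and Lemma~\ref{thirdIso} enter crucially. Without $(N)$ the required cokernel $A/T(K)$ may not even exist in $\Cc$, so the factorisation strategy collapses at its first step; and without Lemma~\ref{thirdIso} there is no direct way to identify the kernel of the residual morphism with the torsion-free quotient $K/T(K)$. In the abelian setting, every monomorphism is normal and both difficulties disappear, which explains why $(N)$ becomes invisible there.
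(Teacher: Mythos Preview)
Your proof is correct and follows essentially the same approach as the paper's: both construct the factorisation by quotienting $A$ by the normal subobject $T(K)$ (using condition $(N)$), identify $\Ker(m)$ with $K/T(K)$ via Lemma~\ref{thirdIso}, and argue stability of $\overline{\Ef}$ from pullback-stability of normal epimorphisms together with preservation of kernels under pullback. The only difference is that you spell out the orthogonality argument and route the kernel computation through the regular epi--mono factorisation of $f$, whereas the paper leaves these details to the reader and to the reference~\cite{EGinf}.
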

\begin{proof}
Let us recall how to construct the $(\overline{\Ef}, \overline{\Mf})$-factorisation of an arrow $f \colon A \rightarrow B$ in $\Cc$. If $k \colon K \rightarrow A$ is the kernel of $f$ we know, by assumption, that the monomorphism $k \circ t_K \colon T(K) \rightarrow A$ is normal. By factorising $f$ through the quotient $A/T(K)$ one gets the $(\overline{\Ef}, \overline{\Mf})$-factorization $m \circ q$ of $f$:
$$
\xymatrix{A \ar[r]^-q & A/T(K) \ar[r]^-m & B.}
$$
It is obvious that $q$ belongs to $\overline{\Ef}$; the fact that $m$ is in $\overline{\Mf}$ follows from Lemma \ref{thirdIso}, from which we can deduce that the kernel of $m$ is given by $K/T(K)=HI(K)$. Observe that the class $\overline{\Ef}$ is pullback-stable, since normal epimorphisms are pullback-stable by assumption, and the kernels of two parallel arrows in a pullback square are isomorphic. The 
rest of the proof is easy, and it can be found in \cite{EGinf}.
\end{proof}
The following Lemma was also proved in \cite{EGinf}: we recall its proof for the reader's convenience.
\begin{lemma}\label{barstable}
Given any torsion theory $(\Yc,\Xc)$ in a pointed category with pullback-stable normal epimorphisms, one always has that $\overline{\Ef}\subseteq \Ef'$.
\end{lemma}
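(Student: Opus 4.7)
The plan is to break the inclusion $\overline{\Ef}\subseteq \Ef'$ into two essentially separate observations: first, that $\overline{\Ef}\subseteq \Ef$ already, and second, that $\overline{\Ef}$ is itself stable under pullback. Combining these two facts immediately gives the conclusion, since a morphism every pullback of which lies in $\overline{\Ef}$ will in particular have every pullback in $\Ef$.

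To show $\overline{\Ef}\subseteq \Ef$, I would take a morphism $f\colon A\to B$ in $\overline{\Ef}$, so that $f$ is a normal epimorphism with $K=\Ker(f)\in \Yc$, and write $f=\coker(k)$ where $k\colon K\to A$ is the kernel. The reflector $I\colon \Cc\to\Xc$, being a left adjoint, preserves cokernels; hence $I(f)\cong \coker\bigl(I(k)\bigr)$. Since $K\in\Yc$, the $(\Yc,\Xc)$-decomposition of $K$ is the short exact sequence $0\to K\to K\to 0\to 0$, forcing $I(K)=0$. Thus $I(k)$ is the zero morphism, and its cokernel is an isomorphism, so $I(f)$ is an isomorphism and $f\in\Ef$.

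For the pullback-stability of $\overline{\Ef}$, consider a pullback square
\[
\xymatrix{
P \ar[r] \ar[d]_{p^*(f)}\ar@{}[rd]|<<{\pullback} & A \ar[d]^{f} \\
C \ar[r]_{g} & B.}
\]
By the standing assumption that normal epimorphisms are pullback-stable, $p^*(f)$ is again a normal epimorphism; and the kernel of $p^*(f)$ is canonically isomorphic to $\Ker(f)$ (kernels of parallel arrows in a pullback square agree), hence lies in $\Yc$. Therefore $p^*(f)\in\overline{\Ef}$, which combined with the previous step yields $p^*(f)\in \Ef$; since the pullback was arbitrary, $f\in \Ef'$.

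No real obstacle is anticipated: the only two ingredients are the closure properties stated in the hypothesis (pullback-stability of normal epimorphisms, and the reflector being a left adjoint so as to preserve cokernels) together with the definitional fact that $I$ kills every object of $\Yc$. The argument therefore goes through in any pointed category with pullback-stable normal epimorphisms, without invoking condition $(N)$ or the existence of $(\overline{\Ef},\overline{\Mf})$-factorisations from Proposition \ref{inducedfactorisation}.
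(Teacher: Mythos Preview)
Your proof is correct and follows essentially the same approach as the paper's: reduce $\overline{\Ef}\subseteq\Ef'$ to $\overline{\Ef}\subseteq\Ef$ via the pullback-stability of $\overline{\Ef}$, then verify the latter by using that $I$, as a left adjoint, sends $f=\coker(k)$ to the cokernel (in $\Xc$) of $I(k)$, which is an isomorphism since $I(K)=0$. The only cosmetic difference is that the paper states the pullback-stability of $\overline{\Ef}$ first and then proves the inclusion, whereas you reverse the order and spell out the kernel argument for stability in more detail.
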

\begin{proof}
 Since $\overline{\Ec}$ is pullback-stable, to prove that $\overline{\Ef}\subseteq \Ef'$ it suffices to show that $\overline{\Ef}\subseteq \Ef$. Consider a normal epimorphism $f$ in $\Cc$ with $\mathsf{Ker}(f) \in\Yc$. Then $f$ is the cokernel of its kernel $\ker (f)$ and, consequently, $I(f)$ the cokernel (in $\Xc$) of $I(\ker(f))$. Since $I(\mathsf{Ker}(f))=0$ by assumption, this implies that $I(f)$ is an isomorphism.
\end{proof}

%
We can now prove the main result of this article:
\begin{theorem}\label{main}
Let $\Cc$ be a normal category, $\Xc$ a torsion-free subcategory of $\Cc$ satisfying condition (N), and $(\Ef,\Mf)$ the associated reflective factorisation system. Consider the following list of properties:
\begin{enumerate}
\item
$\Xc$ ``covers'' $\Cc$: for any object $B\in\Cc$, there exists an effective descent morphism $E\to B$ such that $E\in\Xc$;
\item
$\Mf^*=\overline{\Mf}$;
\item
$\Ef'=\overline{\Ef}$;
\item
$(\Ef',\Mf^*)$ is a factorisation system.
\end{enumerate}
The implications $(1)\Rightarrow (2)\Rightarrow (3)$ and $(2)\Rightarrow (4)$ hold. $(2)\Rightarrow (1)$ holds as soon as $\Cc$ has enough projectives (with respect to effective descent morphisms).
\end{theorem}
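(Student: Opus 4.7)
The plan is to prove the implications in sequence, relying on Proposition~\ref{inducedfactorisation} and Lemma~\ref{barstable}. A preliminary observation I shall use throughout is that, in any torsion theory, the torsion-free class $\Xc$ is automatically closed under subobjects: if $K \hookrightarrow X$ is a monomorphism with $X \in \Xc$, then the composite $T(K) \to K \to X$ is zero because $T(K) \in \Yc$ and $X \in \Xc$, forcing $T(K) = 0$ by monicity, whence $K \cong K/T(K) \in \Xc$. As a consequence, every morphism in $\Mf$ has its kernel in $\Xc$: $\Mf$ is the pullback-closure of morphisms between objects of $\Xc$, the kernel of such a morphism lies in $\Xc$ by the observation just made, and kernels are preserved under arbitrary pullback; in particular $\Mf \subseteq \overline{\Mf}$.

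For $(1) \Rightarrow (2)$, I would treat the two inclusions separately. The inclusion $\Mf^* \subseteq \overline{\Mf}$ needs no additional hypothesis: if $f \in \Mf^*$ is witnessed by an effective descent $p$, then $\Ker(f) \cong \Ker(p^*(f)) \in \Xc$ since $p^*(f) \in \Mf \subseteq \overline{\Mf}$. For the reverse inclusion, given $f \colon A \to B$ with $\Ker(f) \in \Xc$, use (1) to choose an effective descent $p \colon E \to B$ with $E \in \Xc$ and consider $p^*(f) \colon E \times_B A \to E$. Factoring it in the normal category $\Cc$ as a normal epimorphism followed by a monomorphism, $E \times_B A \twoheadrightarrow Q \hookrightarrow E$, the image $Q$ lies in $\Xc$ by subobject closure, while the kernel of $E \times_B A \to Q$ is isomorphic to $\Ker(f) \in \Xc$. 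Closure of $\Xc$ under extensions then forces $E \times_B A \in \Xc$, so $p^*(f)$ is a morphism between objects of $\Xc$, hence lies in $H(\Xc) \subseteq \Mf$, and $f \in \Mf^*$.

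The implications $(2) \Rightarrow (3)$ and $(2) \Rightarrow (4)$ I would deduce jointly. Under (2), the pair $(\overline{\Ef}, \Mf^*)$ coincides with the factorisation system $(\overline{\Ef}, \overline{\Mf})$ of Proposition~\ref{inducedfactorisation}, and hence its right class determines its left class as $\overline{\Ef} = (\Mf^*)^{\uparrow}$. Combining the general inclusion $\Ef' \subseteq (\Mf^*)^{\uparrow}$ recalled in the introduction with $\overline{\Ef} \subseteq \Ef'$ from Lemma~\ref{barstable} forces $\Ef' = \overline{\Ef}$, which is (3); and (4) follows at once, since $(\Ef', \Mf^*)$ then coincides with the factorisation system $(\overline{\Ef}, \overline{\Mf})$.

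The main obstacle is $(2) \Rightarrow (1)$, where the projectivity hypothesis must enter. Given $B$, pick an effective descent $\pi \colon P \to B$ with $P$ projective (with respect to effective descent morphisms), and form its $(\overline{\Ef}, \overline{\Mf})$-factorisation $\pi = m \circ q$, with $q \colon P \to E$ in $\overline{\Ef}$ and $m \colon E \to B$ in $\overline{\Mf}$. The map $m$ is itself an effective descent, since $\pi$ is and $q$ is a normal epimorphism, so the difficulty reduces to forcing $E \in \Xc$. By (2), $m$ belongs to $\Mf^*$, so there is an effective descent $s \colon F \to B$ with $s^*(m) \in \Mf$; projectivity of $P$ lifts $\pi$ along $s$ to a morphism $\tilde\pi \colon P \to F$ which, together with $q$, produces a comparison map $P \to F \times_B E$ compatible with the trivial covering $s^*(m) \colon F \times_B E \to F$. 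I expect the crux to lie in exploiting the pullback property of the naturality square of $\eta$ against $s^*(m)$, combined with condition $(N)$ and the pullback-stability of $\Ef' = \overline{\Ef}$, in order to propagate torsion-freeness through this diagram and conclude that $T(E) = 0$, thereby identifying $m \colon E \to B$ as the sought effective descent onto $B$ from an object of $\Xc$.
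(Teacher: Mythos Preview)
Your arguments for $(1)\Rightarrow(2)$, $(2)\Rightarrow(3)$, and $(2)\Rightarrow(4)$ are correct and essentially coincide with the paper's.

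The gap is in $(2)\Rightarrow(1)$. You acknowledge this yourself: the final paragraph ends with ``I expect the crux to lie in\ldots'' and a sketch of what you hope will happen, not an argument. Two concrete problems: first, the claim that $m$ is an effective descent morphism merely because $\pi=m\circ q$ is one and $q$ is a normal epimorphism is not automatic in an arbitrary normal category, where effective descent morphisms need not coincide with regular epimorphisms. Second, and more fundamentally, knowing that $s^*(m)\in\Mf$ only gives you control over $F\times_B E$, not over $E$; there is no evident mechanism to descend torsion-freeness along the effective descent $s$, and the diagram chase you outline does not close.

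The paper sidesteps all of this with a much shorter trick. Under enough projectives, condition $(1)$ is equivalent to the statement that every projective object lies in $\Xc$, so it suffices to prove the latter. Given a projective $P$, the morphism $0\to P$ has kernel $0\in\Xc$, hence lies in $\overline{\Mf}=\Mf^*$ by $(2)$. Now use projectivity once: if $r\colon F\to P$ is any effective descent witnessing $0\to P\in\Mf^*$, then $1_P$ lifts through $r$, so $1_P^*(0\to P)=0\to P$ is a pullback of $r^*(0\to P)\in\Mf$, and pullback-stability of $\Mf$ gives $0\to P\in\Mf$. But $0\to P$ in $\Mf$ means its $\eta$-naturality square is a pullback, i.e.\ $\Ker(\eta_P)=0$; since $\eta_P$ is already a normal epimorphism, it is an isomorphism and $P\in\Xc$. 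This avoids having to construct the cover of a given $B$ by hand.
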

\begin{proof}
$(1)\Rightarrow (2)$. It is well known and easily verified that $\Mf^*\subseteq \overline{\Mf}$. For the other inclusion, consider a morphism $f\colon A\to B$, an effective descent morphism $p\colon E\to B$ and the diagram
\[\vcenter{\begin{equation}\label{coveringdiagram}
\xymatrix{
HI(P) \ar[d]_{HI(p^*(f))} & P \ar@{}[rd]|<<{\pullback} \ar[d]_{p^*(f)}\ar[l]_-{\eta_P} \ar[r] & A \ar[d]^f\\
HI(E) & E \ar[r]_p \ar[l]^-{\eta_E} & B,}
\end{equation}}
\]
and assume that both $\Ker(f)$ and $E$ are in $\Xc$. Since in this case both  $\Ker(p^*(f))\cong \Ker(f)$ and the regular image $\Im[p^*(f)]$ of $p^*(f)$ lie in $\Xc$ --- the latter because it is a subobject of $E$ --- it follows that also $P\in\Xc$. Indeed, the torsion-free subcategory $\Xc$ is closed in $\Cc$ under extensions: 
\[
\xymatrix{
0 \ar[r] & \Ker(p^*(f)) \ar[r] & P \ar[r] & \Im(p^*(f)) \ar[r] & 0.}
\]
Consequently, $p^*(f)$ lies in $\Xc$. In particular, $p^*(f)\in\Mf$, hence $f\in\Mf^*$.

$(2)\Rightarrow (1)$. If $\Cc$ has enough projectives then (1) is equivalent to the condition that every projective lies in $\Xc$. We verify the latter. For this, first of all note that if $p\colon E\to B$ is an effective descent morphism such that $E$ is projective with respect to effective descent morphisms, then any $f\colon A\to B$ in $\Mf^*$ is necessarily \emph{split} by $p$, which means that $p^*(f)$ is a trivial covering, i.e. a member of the class $\Mf$: this follows from the pullback-stability of $\Mf$. 

Now let $P$ be a projective object. Then $0\to P$ is in $\Mf^*= \overline{\Mf}$, and the identity $1_P\colon P\to P$ an effective descent morphism with projective domain. It follows that $0\to P$ is split by $1_P$. Consequently, $0\to P$ is in $\Mf$, and this implies that $P\in\Xc$, since pullbacks reflect monomorphisms in the normal category $\Cc$, and the arrow $\eta_P \colon P \rightarrow HI(P)$ is then both a normal epimorphism and a monomorphism. This proves that every projective lies in $\Xc$, as desired.

$(2)\Rightarrow (3)$. $\overline{\Ef}\subseteq\Ef'$ by Lemma \ref{barstable}. If (2) holds, then since $\Ef'\subseteq (\Mf^*)^{\uparrow}$ (by Proposition 6.7 in \cite{CJKP}) and by Lemma \ref{inducedfactorisation} we have that $\Ef'\subseteq (\Mf^*)^{\uparrow}=\overline{\Mf}^{\uparrow}=\overline{\Ef}$. 

$(2)\Rightarrow (4)$ follows immediately from Proposition \ref{inducedfactorisation} and $(2)\Rightarrow (3)$.
\end{proof}
\begin{remark}
Observe that $(4)$ is strictly weaker than conditions $(1)$, $(2)$ and $(3)$:  if $(\Yc,\Xc)=(\Cc,\{0\})$ is the trivial torsion theory in a (non-trivial) normal category $\Cc$, then we have that $\Ec'=\Ec$ is the class of all morphisms, $\overline{\Ec}$ the class of the normal epimorphisms, and $\Mf^*= \Mf$ the class of all isomorphisms. Therefore condition $(4)$ is satisfied, while $(3)$ is not.
\end{remark}

\section{Examples}

\subsection{Torsion theories in the category of abelian groups}
For any torsion theory $(\Yc,\Xc)$ in an abelian category $\mathbb C$ the condition $(N)$ is trivially satisfied, so that in order to apply Theorem \ref{main}, it suffices to know that $\Xc$ ``covers'' $\Cc$. The case of the variety $\Cc=\Ab$ of abelian groups is particularly simple: here \emph{every} torsion-free subcategory $\Xc$ satisfies the latter condition, with the exception of the trivial $\Xc=\{0\}$. Indeed, any free abelian group necessarily belongs to every non-trivial torsion-free subcategory, by the following argument (which we adapted from the proof of Proposition $5.5$ in \cite{RT}):
 assuming that there exists a free abelian group $F$ that is not in $\Xc$, for a given torsion theory $(\Yc,\Xc)$ in $\Cc$, we consider the induced canonical short exact sequence
$$
\xymatrix{ 0 \ar[r] & T(F) \ar[r] & F \ar[r]  & HI(F) \ar[r] & 0}
$$
and observe that the subgroup $T(F) \not=0$ of $F$ is free, as is any subgroup of a free abelian group. Hence we have a non-trivial free abelian group in the torsion subcategory $\Yc$. This implies, first of all, that the free abelian group on the one-element set is also in $\Yc$ (since $\Yc$ is closed in $\mathsf{Ab}$ under quotients) and then that the same is true for an arbitrary free abelian group (since $\Yc$ is closed in $\mathsf{Ab}$ under coproducts) so that, finally, \emph{any} abelian group must be in $\Yc$ (once again since $\Yc$ is closed in $\mathsf{Ab}$ under quotients). Hence, $(\Yc,\Xc)=(\Ab,\{0\})$ is the only torsion theory in $\Ab$ which does not have all free abelian groups in $\Xc$. For every other torsion theory in $\Ab$ we therefore obtain (via Theorem \ref{main}) a monotone-light factorisation system $(\Ef',\Mf^*)$ where $\Ef'$ consists of the surjective group homomorphisms with kernel in $\Yc$, and $\Mf^*$ consists of the homomorphisms with kernel in $\Xc$. 

For instance, we might take for $\Xc$ (respectively, $\Yc$) the full subcategory of $\Ab$ of torsion-free (respectively, torsion) abelian groups in the usual sense. Or, $\Xc=\mathsf{Red}$ could be the full subcategory of $\Ab$ of all reduced groups and $\Yc=\mathsf{Div}$ the one of all divisible groups. Note that the latter torsion theory is not hereditary (see, for instance, \cite{Bo2}). Moreover, since any abelian group admits a monomorphism into a divisible one, the ``dual'' torsion theory $(\mathsf{Red}^{\textrm{op}},\mathsf{Div}^{\textrm{op}})$ in $\Ab^{\textrm{op}}$ (which again is not hereditary) also satisfies condition $(1)$ in Theorem \ref{main} and therefore gives us a monotone-light factorisation system in the category $\Ab^{\textrm{op}}$, which is known to be equivalent to the category of compact abelian groups.

\subsection{Torsion theories in the category of groups}
For any torsion theory  $(\Yc,\Xc)$ in the (semi-abelian, hence normal) variety $\Gp$ of groups the functoriality of the radical $T$ implies that the subgroup $T(G)$ of any group $G$ is necessarily characteristic. Accordingly, condition $(N)$ is always satisfied: indeed, given a normal monomorphism $K \rightarrow A$ in $\Gp$, $T(K)$ is a characteristic subgroup of $K$, thus also a normal subgroup of $A$.
On the other hand, just as in the case of $\Ab$, if $(\Yc,\Xc)$ is a non-trivial torsion theory in $\Gp$, then there exists for every group $G$ a surjective homomorphism $p \colon X \rightarrow G$ with $X$ in $\Xc$ \cite{Shm}. This can be shown using the same argument as in the previous example: since every subgroup of a free group is free, every non-trivial torsion-free subcategory $\Xc$ must contain all free groups.  Accordingly, we may apply Theorem \ref{main} to any non-trivial torsion theory $(\Yc,\Xc)$ in $\Gp$. For instance, we could take for $\Xc$ the subcategory of torsion-free groups in the usual sense, and for $\Yc$ the subcategory of groups generated by their elements of finite order. 

\subsection{ Hausdorff groups versus totally disconnected groups.}
Let $\GrH$ be the (homological \cite{BC} hence, in particular, normal) category of Hausdorff groups, and $\GrTot$ its full subcategory of totally disconnected groups. The forgetful functor $H \colon \GrTot \rightarrow \GrH$ admits a left adjoint $I \colon \GrH \rightarrow \GrTot$, which sends a Hausdorff group $G$ to the quotient $I(G) = G / c_0(G)$ of $G$ by the connected component  $c_0(G)$ of the neutral element $0$ of $G$.
The category $ \GrTot$ is known to be a torsion-free subcategory of $\GrH$ (see \cite{BG, GR}, for instance). Furthermore, condition $(N)$ is easily seen to be satisfied: indeed, by the same argument as in the previous example, we have for any normal subgroup $K \rightarrow A$ in $\GrH$, that $T(K) =  c_0(K)$ is a normal subgroup of $A$. Since, moreover, the topology of $c_0(K)$ is the one induced by that of $A$, the inclusion $c_0(K) \rightarrow A$ is a normal monomorphism in $\GrH$.
Finally, as was proved by Arkhangel'skii in \cite{Ark}, any Hausdorff group is a regular quotient of a totally disconnected group, so that condition $(1)$ in Theorem \ref{main} is indeed satisfied. Hence, we obtain on $\GrH$ the following monotone-light factorisation system $(\Ef',\Mf^*)$: the morphisms in $\Ef'$ are the open surjective homomorphisms in $\GrH$ with a connected kernel, whereas the class $\Mf^*$ consists of the continuous homomorphisms with a totally disconnected kernel. Since for  a continuous homomorphism $f\colon A\to B$ any of the fibres  $f^{-1}(b)$ ($b\in B$) is connected (respectively, totally disconnected) as soon as the kernel $\Ker(f)=f^{-1}(0)$ is connected (respectively, totally disconnected), the morphisms in $\Ef'$ (respectively, in $\Mf^*$) are thus precisely the continuous maps which are monotone (respectively, light) in the usual sense. Hence, Theorem \ref{main} shows, in particular, that every continous homomorphism admits a monotone-light factorisation in the classical sense, even if the spaces $A$ and $B$ are not required to be compact.

It would be interesting to know whether this result can be extended to arbitrary categories of Hausdorff semi-abelian algebras in the sense of \cite{BC}. The obstruction comes from the possibility of extending to the semi-abelian context the result which states that every Hausdorff group is a regular quotient of a totally disconnected group, as already observed in \cite{GJ}.

\subsection{Commutative rings versus reduced rings}\label{rings}
Recall that a commutative ring $A$ (not necessarily with unit) is \emph{reduced} if it has no (non-zero) nilpotent element: for all $a \in A$ and $n\ge 1$, $a^n=0$
implies that $a=0$. We write $\mathsf{CRng}$ for the (semi-abelian, hence normal) variety of commutative rings, $\mathsf{RedCRng}$ for the quasivariety of reduced rings, and $H \colon \mathsf{RedCRng} \rightarrow \mathsf{CRng}$ for the forgetful functor. As observed in \cite{CDT}, $\mathsf{RedCRng}$ is a torsion-free subcategory of $\mathsf{CRng}$, which is hereditary since the corresponding torsion subcategory $\mathsf{NilCRng}$ of nilpotent commutative rings is closed in $\mathsf{CRng}$ under subrings. This implies that the reflector $I \colon \mathsf{CRng} \rightarrow \mathsf{RedCRng}$ preserves monomorphisms, so that condition $(N)$ is satisfied: indeed, given a normal monomorphism (= ideal) $k \colon {K} \rightarrow  {A}$ in $\mathsf{CRng}$, we obtain a commutative diagram of short exact sequences 
$$
\xymatrix{0 \ar[r] & T({K}) \ar[r]^{t_K} \ar[d]_{T(k)} & {K} \ar[r]^-{\eta_{{K}}} \ar[d]^k & HI( {K}) \ar[r]  \ar[d]^{HI(k)}& 0 \\
0 \ar[r] & T( {A}) \ar[r]_{t_A} &  {A} \ar[r]_-{\eta_{{A}}} & HI( {A}) \ar[r] & 0
}
$$
in $\mathsf{CRng}$, where $HI(k)$ is a monomorphism. Accordingly, the left hand square is a pullback, and $k \circ t_K \colon T(K) \rightarrow A$ a normal monomorphism, as an intersection of normal monomorphism. It is also clear that any free commutative ring is reduced, 
so that Theorem \ref{main} applies to the torsion theory $(\mathsf{NilCRng}, \mathsf{RedCRng})$ in $\mathsf{CRng}$. The induced  factorisation system $(\Ef',\Mf^*)$ is the following: a ring homomorphism in $\mathsf{CRng}$ is in $\Ef'$ if it is surjective with a nilpotent kernel, and it belongs to $\Mf^*$ when its kernel is reduced.

\subsection{Internal groupoids versus equivalence relations}
Let $\mathbb C$ be a normal Mal'tsev category, and let $\mathsf{Grpd}(\mathbb C)$ be the category of (internal) groupoids in $\Cc$. Recall that a groupoid $\mathsf{A}=(A_1,A_0,m, d,c,i)$ 
  in $\Cc$ is a diagram of the form 
\[
\xymatrix{
A_1\times_{A_0}A_1 \ar[r]^-{m} & A_1 \ar@<1.3 ex>[rr]^{d} \ar@<-1.8 ex>[rr]_{c} && A_0, \ar@<0.7 ex>[ll]_{i}}
\]
where $A_0$ represents the ``object of objects'', $A_1$ the ``object of arrows'', $A_1\times_{A_0}A_1$ the ``object of composable arrows'', $d$ the ``domain'', $c$ the ``codomain'', $i$ the ``identity'', and $m$ the ``composition''. Of course, these morphisms are required to satisfy the commutativity conditions expressing, internally, the fact that $\mathsf{A}$ is a groupoid. 
The category $\mathsf{Grpd}(\mathbb C)$ is regular and Mal'tsev \cite{Gran}, and it is also a normal category. This latter observation easily follows from the fact that a morphism $(f_0,f_1) \colon \mathsf{A} =(A_1,A_0,m, d,c,i) \rightarrow \mathsf{B} = (B_1,B_0,m, d,c,i)$ is a normal epimorphism in $\mathsf{Grpd}(\mathbb C)$ if and only if both the arrows $f_0 \colon A_0 \rightarrow B_0$ and $f_1 \colon A_1 \rightarrow B_1$ are normal epimorphisms in $\mathbb C$.
The category $\mathsf{Eq}(\mathbb C)$ of internal equivalence relations in $\mathbb C$ is a torsion-free subcategory of $\mathsf{Grpd}(\mathbb C)$, as explained in Example 5.5 in \cite{BG} (in the more restrictive context of homological categories \cite{BB}, but the arguments used there still apply in the present context). The corresponding torsion part consists of the subcategory $\mathsf{Ab}(\mathbb C)$ of abelian objects in $\mathbb C$ (seen as particular internal groupoids). The fact that $\mathsf{Eq}(\mathbb C)$ ``covers'' $\mathsf{Grpd}(\mathbb C)$ follows from the well known fact that any groupoid $\mathsf{A}=(A_1,A_0,m, d,c,i)$ in $\mathbb C$ is a regular quotient of the equivalence relation $\mathsf{Eq(}d\mathsf{)} = (A_1 \times_{A_0} A_1, A_1, \tau, p_1, p_2, (1_{A_1}, 1_{A_1}))$ which occurs as the kernel pair of $d$. The fact that condition $(N)$ holds true follows from the fact that the reflector $I \colon \mathsf{Grpd}(\mathbb C) \rightarrow \mathsf{Eq}(\mathbb C)$ to the torsion-free subcategory $\mathsf{Eq}(\mathbb C)$ clearly preserves monomorphisms, so that the argument recalled in the Example \ref{rings} also applies to the present example.

Remark that the category $\mathsf{Grpd}(\mathsf{Grp})$ of internal groupoids in the category $\Gp$ of groups is known to be equivalent to the category $\mathsf{XMod}$ of crossed modules introduced by J.H.C. Whitehead \cite{W}. An object in $\mathsf{XMod}$ is a group homomorphism $\alpha \colon A \rightarrow B$ together with an action of $B$ on $A$, written ${\,}^{b} a$ for any $a \in A$ and $b \in B$, such that $\alpha({\,}^{b} a)= b a b^{-1}$ and ${\,}^{\alpha(a)} a_1= a a_1 a^{-1}$ for any $a, a_1 \in A$, $b \in B$. An arrow $(f_0, f_1) \colon \alpha \rightarrow \alpha'$ in the category $\mathsf{XMod}$ is a pair of group homomorphisms making the diagram
\begin{equation}\label{arrow}
\vcenter{\xymatrix{A \ar[r]^{f_1} \ar[d]_{\alpha} & A' \ar[d]^{\alpha'} \\
B \ar[r]_{f_0} & B'
}}
\end{equation}
commute, and preserving the action: $f_1 ( {\,}^{b} a) = {\,}^{f_0(b)} f_1(a)$.

The equivalence $\mathsf{Grpd}(\mathsf{Grp})\cong \mathsf{XMod}$ restricts to an equivalence between the category $\mathsf{Eq}(\Gp)$ of internal equivalence relations in $\Gp$ and the category $\mathsf{NormMono}$ of normal monomorphisms of groups, where the action is given by conjugation. The reflector $I \colon \mathsf{XMod} \rightarrow \mathsf{NormMono}$ sends a crossed module $\alpha \colon A \rightarrow B$ to the normal monomorphism $\alpha(A) \rightarrow B$ which is the inclusion of the image $\alpha(A)$ in $B$. The monotone-light factorisation of an arrow $(f_0,f_1) \colon \alpha \rightarrow \alpha'$ in $ \mathsf{XMod}$ is then obtained as follows. One considers the kernel $(\ker(f_0),\ker(f_1)) \colon \hat{\alpha} \rightarrow \alpha$ of $(f_0,f_1)$ in $ \mathsf{XMod}$, and one then factors $A$ by its normal subgroup $\mathsf{Ker}(\hat{\alpha})$, so that $(f_0,f_1) \colon \alpha \rightarrow \alpha'$ in (\ref{arrow}) decomposes into the commutative diagram
$$
\xymatrix{A \ar[r]^-{\pi_{ \mathsf{Ker}(\hat{\alpha})}} \ar[d]_{\alpha} & A/ \mathsf{Ker}(\hat{\alpha}) \ar@{.>}[r]^-{\phi_1} \ar[d] & A' \ar[d]^{\alpha'} \\
B \ar@{=}[r]_{1_B} & B \ar[r]_{f_0} & B',
}
$$
where $\phi_1 \circ \pi_{ \mathsf{Ker}(\hat{\alpha})} = f_1$,  $(1_B, \pi_{ \mathsf{Ker}(\hat{\alpha})})$ is in $\Ef'$, and $(f_0, \phi_1)$ is in $\Mf^*$.

\end{document}